\documentclass[graybox]{svmult}

\usepackage{type1cm}
\usepackage{makeidx}
\usepackage{graphicx}
\usepackage{multicol}
\usepackage[bottom]{footmisc}

\usepackage{newtxtext}
\usepackage[varvw]{newtxmath}

\makeindex

\begin{document}

\title{Some properties of Besov-Morrey type spaces}

\author{
Nazerke Tleukhanova\inst{1}\inst{2}
\and Kelbet Sadykova\inst{3}
\and Aigerim Ussentay\inst{1}
}

\institute{
\inst{1} L.N. Gumilyov Eurasian National University, Astana, Kazakhstan \\
\inst{2} Geometry LLP, Astana, Kazakhstan \\
\inst{3} S. Seifullin Kazakh Agro-Technical Research University, Astana, Kazakhstan
}

\maketitle

\begin{abstract}
 This paper considers Sobolev-Morrey and Besov–Morrey spaces. For Morrey spaces, multipliers are studied, and a theorem on multipliers from $M_p^{\lambda}(\mathbb{T})$ to $M_p^{\lambda}(\mathbb{T})$ for \,\,\,$1<p<\infty,0\le\lambda<\frac{1}{p}$ is obtained. Based on this theorem, embeddings of $B_1^\alpha\left(M_p^\lambda\right)\hookrightarrow W^\alpha\left(M_p^\lambda\right)\hookrightarrow B_\infty^\alpha\left(M_p^\lambda\right)$ for \,\,\,$0<p\le\infty$, $0\le\lambda\le\frac{1}{p}$, $\alpha\in\mathbb R$ are derived. An interpolation theorem for Besov–Morrey spaces is proved.
\end{abstract}

\section{Introduction}

The theory of function spaces constitutes one of the most important foundations for the analysis of functions, distributions, and solutions to partial differential equations (PDEs). Classical scales such as Sobolev spaces, Besov spaces, and Triebel–Lizorkin spaces provide flexible tools for studying the interplay between smoothness and integrability.

However, in many PDE problems it is not sufficient to measure norms in a purely global sense. Instead, a more delicate control of the local behavior of functions is required, especially when the data or solutions exhibit nonuniform structures or singularities. In such situations, spaces of local integrability, in particular Morrey spaces, play a crucial role.

By combining ideas of smoothness and frequency analysis from Besov and Triebel–Lizorkin spaces with the concepts of local integrability and scale sensitivity inherent in Morrey spaces, new function spaces were introduced that simultaneously control smoothness and local integrability. These spaces are well suited to capture inhomogeneities, local irregularities, singularities, and distributional data.

This development significantly broadened the range of analytical tools available for the study of nonlinear PDEs, especially in settings where classical Sobolev or Besov spaces are no longer adequate.

The first serious and systematic step in this direction was made in the seminal work of Hideo Kozono and Masao Yamazaki \cite{KozYam}, where Besov-type versions of Morrey spaces were introduced.

A major milestone was the appearance of the monograph \cite{YuSiYa}, which presented a unified framework combining Besov and Triebel–Lizorkin spaces with Morrey and Campanato spaces.

An important continuation of this line of research was provided by the fundamental study of Besov–Morrey spaces in \cite{Mazzucato}. Further developments and refinements can be found in \cite{IzNoi}, \cite{Sickel}, \cite{HarSkr}, \cite{Sai}, and \cite{OkaZhan}.

For the analysis of trigonometric series and integral operators in these scales, the classical works on harmonic analysis in \cite{Zigmu} and \cite{Grafakos} are utilized. The theory of interpolation of operators, which is critical for establishing interpolation identities, relies on the works \cite{Bennet}, as well as on modern results in \cite{B. Ch. N. } regarding Marcinkiewicz-type theorems for Morrey-type spaces. 

The investigation of convolution properties and O'Neil-type inequalities in various function spaces (Lorentz, Besov, Triebel–Lizorkin) is presented in a series of works \cite{NurTich}, \cite{ST-EMJ}, \cite{ST-KSU}, \cite{ST-KazNU}, \cite{ST-KazNPU}, \cite{BekKerNur}. Recent research also encompasses convolution operators directly in Morrey spaces (\cite{ND}), integral operators on the Heisenberg group (\cite{Gul}), and O'Neil-type inequalities for Sobolev-Morrey spaces (\cite{ISAAC}).

\section{The Besov-Morrey spaces}

Let $0\le\lambda\le\frac{n}{p}$ and $0<p<\infty$. All set of functions $f{\in{L^{loc}_{p}}}$ called Morrey spaces, if 
$$||f||_{M^{\lambda}_p}=\sup_{x\in \mathbb R^n}\sup_{r>0} r^{-\lambda}||f||_{L_p(B(x,r))}<\infty$$

 where, $B_r(x)$ is the ball centered at point $x$ and with radius $r>0$.  
If we notice that $\lambda=0 $, then $M_{p}^{0}(\mathbb R^n)=L_p(\mathbb R^n)$, indeed, let $f\in M_{p}^{0}$, then  
$$||f||_{M^{0}_p}=\sup_{x\in \mathbb R^n}\sup_{r>0}||f||_{L_p(B(x,r))}=||f||_{L_p(\mathbb R^n)}.$$

If \, $\lambda=\frac{n}{p}$  and  $0<p<\infty$, then it holds    $M_{p}^{\frac{n}{p}}(\mathbb R^n)=L_{\infty}(\mathbb R^n)$.

 If\, $\lambda<0$ \,or\, $\lambda>\frac n{p}$, then $M_p^{\lambda}=\Theta$, where, $\Theta$  is the set of all functions equivalent to zero on $\mathbb R^n$. 

Let \;$\mathbb{T}=[0,1)$ be $one$-dimensional torus, and  let $\mu$ denote the $one$-dimensional Lebesgue measure, $1<p<\infty$, \,$0<q\le\infty$\, $0\le\lambda\le\frac{n}{p}$,\, $\alpha\in \mathbb R$.

We will denote by \,$W^{\alpha}(M_p^{\lambda})$ \,the set of all trigonometric series $f=\displaystyle\sum_{m\in \mathbb Z}a_me^{2\pi imx}$ (generally speaking, divergent), such that there exists a function $f^\alpha\in M_p^{\lambda}$ with a Fourier series $f^\alpha\sim\displaystyle\sum_{m\in \mathbb Z}\bar {m}^{\alpha} a_m e^{2\pi imx}$. This definition allows us to consider spaces \, $W^{\alpha}(M_p^{\lambda})$ \,for all \,$\lambda\in \mathbb R$ \,without resorting to distribution theory. 

And its norm is defined by 
   $$ \|f\|_{W^{\alpha}(M_{p}^{\lambda})}=\|f^\alpha\|_{M_{p}^{\lambda}}.$$

The following chain of continuous embeddings holds:
$$W_q^{\alpha}\hookrightarrow W^{\alpha}(M_p^{\lambda})\hookrightarrow W_p^{\alpha}.$$ 
Now let us define the Besov–Morrey space. 

Let
\begin{equation}
       Q_k=[-2^k,2^k]\cap\mathbb Z 
\end{equation}
is a segment centered at the point $O$, with edge $2^k$.

Let $f\in L_1(\mathbb T)$,
    $$f\sim\sum_{m\in\ \mathbb{Z}}\hat f_m e^{2\pi imx},$$

Let’s define  
\begin{equation}
    \Delta_kf=\sum_{m\in Q_k\backslash{Q_{k-1}}} \hat f_me^{2\pi imx}, k=0,1...,
\end{equation}
here at $k=0$, $Q_{k-1}=\oslash$.

Let \,$0<p,q\leq\infty$, \,$0\le\lambda\le\frac{1}{p}$ and $\beta\in \mathbb R$.

Let us define the Besov-Morrey space $B_{q}^\beta\left(M_p^\lambda\right)$ as the set the set of all trigonometric series $f=\displaystyle\sum_{m\in \mathbb Z}a_me^{2\pi imx}$ (generally speaking, divergent) from \,$L_1(\mathbb T)$, \,such that 
\begin{equation}
 \|f\|_{B_{q}^\beta\left(M_p^\lambda\right)}:=\left(\sum_{k=0}^{\infty}\left(2^{\beta k}\|\Delta_kf\|_{M^\lambda_{p}}\right)^q\right)^\frac{1}{q}<\infty,  \quad \text{where}\,   \,\,1< q<\infty
\end{equation}
and in case $q=\infty$
$$ \|f\|_{B_{\infty}^\beta\left(M_p^\lambda\right)}:=\sup_{k}2^{\beta k}\|\Delta_kf\|_{M^\lambda_{p}}<\infty.$$

The following chain of continuous embeddings holds:
$$B_{q,\tau}^{\alpha}\hookrightarrow B_{\tau}^{\alpha}(M_p^{\lambda})\hookrightarrow B_{p,\tau}^{\alpha}.$$

\section{Multipliers in Morrey spaces}
\label{sec:2}
\begin{theorem}\label{theoremMarsel-Riesz}
Let $1<p<\infty$, $0\le \lambda<\frac{1}{p}$,\,  then  the partial sum operator of the Fourier series $ S_n\left(f,y\right)=\int_{-\pi}^{\pi}f(x)D_n(y-x)dx$ \,is bounded from the Morrey space ${M_p^{\lambda}}((\mathbb T)$\,into \,${M_p^{\lambda}}(\mathbb T)$.
\end{theorem}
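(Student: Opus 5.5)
The plan is to reduce the partial sum operator to the conjugate function (periodic Hilbert transform) and then prove boundedness of the latter on $M_p^\lambda(\mathbb T)$ by the standard local/nonlocal splitting. I will use the classical M. Riesz identity
$$S_n f = \tfrac{i}{2}\left(e^{-2\pi i n x}\,\tilde H\big(e^{2\pi i n\cdot} f\big) - e^{2\pi i (n+1) x}\,\tilde H\big(e^{-2\pi i (n+1)\cdot} f\big)\right) + (\text{rank-one term } \hat f_n e^{2\pi i n x}\ \text{or similar}),$$
where $\tilde H$ is the conjugate function operator; see \cite{Zigmu}, \cite{Grafakos}. Multiplication by a unimodular exponential is an isometry of $M_p^\lambda(\mathbb T)$, since the Morrey norm depends only on $|f|$. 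The rank-one terms are bounded by $\|f\|_{L_1}\le \|f\|_{L_p}\le C\|f\|_{M_p^\lambda}$, and the constant function $1$ lies in $M_p^\lambda(\mathbb T)$ because $\lambda\le 1/p$. So it suffices to show that $\tilde H$ is bounded on $M_p^\lambda(\mathbb T)$ with a constant independent of $n$.

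For $\tilde H$ I fix an interval $I=B(x_0,r)\subset\mathbb T$. For $r\ge 1/4$ (say), the bound follows directly from the $L_p(\mathbb T)$-boundedness of $\tilde H$ and $r^{-\lambda}\le C$. For small $r$, I split $f=f\chi_{2I}+f\chi_{\mathbb T\setminus 2I}=f_1+f_2$. The local part is handled by $L_p$-boundedness:
$$\|\tilde Hf_1\|_{L_p(I)}\le C\|f\|_{L_p(2I)}\le C(2r)^\lambda\|f\|_{M_p^\lambda}.$$

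For the far part, I use the kernel bound $|\cot\pi(x-y)|\le C/|x-y|$ (distance on the torus) for $x\in I$, $y\notin 2I$. This gives
$$|\tilde Hf_2(x)|\le C\int_{|y-x_0|>2r}\frac{|f(y)|}{|y-x_0|}\,dy\le C\sum_{j\ge1}\frac{1}{2^jr}\int_{2^{j+1}I}|f|\le C\sum_j (2^jr)^{-1}(2^{j}r)^{1-1/p+\lambda}\|f\|_{M_p^\lambda}$$
by Hölder on dyadic annuli, with the sum taken over $j$ up to the size of $\mathbb T$. Since $\lambda<1/p$, the exponent $\lambda-1/p$ is negative, so the geometric series converges to $C r^{\lambda-1/p}\|f\|_{M_p^\lambda}$. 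Taking the $L_p(I)$ norm multiplies this by $(2r)^{1/p}$, which yields $\|\tilde Hf_2\|_{L_p(I)}\le Cr^\lambda\|f\|_{M_p^\lambda}$. Multiplying by $r^{-\lambda}$ and taking the supremum over $x_0$ and $r$ finishes the argument.

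The main obstacle is the far-part estimate. This is exactly where the strict inequality $\lambda<1/p$ is needed: at $\lambda=1/p$ the space is $L_\infty$ and $S_n$ is not uniformly bounded. Some care is also needed with periodicity, namely the number of annuli is finite (about $\log_2(1/r)$) and the cotangent kernel has to be compared with $1/|x-y|$ on $[-1/2,1/2]$. A smaller bookkeeping issue is the normalization mismatch between the kernel written on $[-\pi,\pi]$ and the torus $[0,1)$, which only affects constants.
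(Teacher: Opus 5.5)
Your argument is correct, and its core is the same as the paper's. The $L_p(\mathbb T)$ bound handles $f\chi_{2I}$, a size estimate $C/|x-y|$ on the kernel away from the support handles the far part, and $\lambda<1/p$ makes the dyadic sum converge.

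The packaging differs. The paper applies the size estimate directly to the Dirichlet kernel, $|D_n(t)|\le 1/(2|\sin(t/2)|)\le C/|t|$, which is already uniform in $n$. It then skips the splitting and invokes Theorem 5.1 of \cite{B. Ch. N. }, a general principle: an operator bounded on $L_p$ and satisfying such an off-support size condition is bounded on $M_p^\lambda$ for $0\le\lambda<1/p$.

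You instead pass to the conjugate function through the M.~Riesz identity. The only kernel you estimate is then the $n$-independent cotangent kernel. Uniformity in $n$, which is what Theorem \ref{theoremMultipl} actually needs, comes free because modulations are isometries of $M_p^\lambda$. The cost is bookkeeping of a finite-rank remainder. With $\widehat{\tilde Hf}_m=-i\,\mathrm{sgn}(m)\hat f_m$ and your shifts $n$, $n+1$, that remainder is $\frac12\hat f_{-n}e^{-2\pi inx}-\frac12\hat f_{n+1}e^{2\pi i(n+1)x}$, which your $L_1$ bound covers. What you gain is a self-contained proof of exactly the transference the paper outsources, with explicit control of periodicity and of where $\lambda<1/p$ enters.

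The modulation step is not needed for the far part. Uniform $L_p$-boundedness of $S_n$ already follows from the Riesz identity, so you could run your splitting directly on $S_n$ with the bound on $D_n$. That is precisely the paper's proof made explicit.
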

\begin{proof}
For the partial sum operator of a Fourier series, the following representation holds:
\begin{equation*}
\begin{split}
 S_n\left(f,y\right)=\int_{-\pi}^{\pi}f(x)D_n(y-x)dx=\\
 =\int_{-\pi}^{\pi}f(x)\frac{\sin(\frac1{2}+n)(y-x)}{2sin(\frac{y-x}{2})}dx.
\end{split}
\end{equation*}
This is operator is bounded from $L_p(\mathbb T)$ to $L_p(\mathbb T)$ \cite {Zigmu}.

On the other hand 
\begin {equation*}
\begin{split}
|S_n(f,y)|\le c \int_{-\pi}^{\pi}\frac{|f(x)|}{|y-x|}dx, \,\,\,\,\,\,  {x}\notin \text{supp} f. 
\end{split}
\end{equation*}
Then from theorem 5.1 \cite {B. Ch. N. }  \,\,the operator $S_n(f,x)$ bounded from \,\,${M_p^{\lambda}}(\mathbb T)$ \,\,to\,\, ${M_p^{\lambda}}(\mathbb T)$.
\end{proof}

\begin{definition}
Let\,\, $1<p\le q<\infty$. We say that a sequence of complex numbers 
$\gamma=\{\gamma_k\}_{k\in \mathbb Z}$ is a multiplier of trigonometric Fourier series from $M_p^\lambda(\mathbb T)$ to $M_q^\lambda(\mathbb T)$\,\,,if, for an arbitrary function $f\in M_p^\lambda(\mathbb T)$ with Fourier series $\displaystyle\sum_{k\in\mathbb Z}\hat f_k e^{2\pi{ikx}}$, there exists a function $f_{\gamma}$ from $M_q^\lambda(\mathbb T)$
  whose Fourier series coincides with the series $\displaystyle\sum_{k\in\mathbb Z}\gamma_k\hat f_k e^{2\pi ikx}$, and the operator $\,\,T_{\gamma}f=f_{\gamma}$ is a bounded operator from $M_p^\lambda(\mathbb T)$ to $M_q^\lambda(\mathbb T)$. The set $m_p^q$- of all multipliers defined in this way forms a linear space with a norm
  $$\|\gamma\|_{m_p^q}=\sup_{f\neq0}\frac{{\|f_\gamma\|}_{M_q^{\lambda}}}{{\|f\|_{M_p^{\lambda}}}}$$
\end{definition}

We will prove the theorem on Fourier series multipliers in Morrey spaces.
\begin{theorem}\label{theoremMultipl}
Let $1<p<\infty$, $0\le\lambda<\frac{1}{p}$. If $\displaystyle\sum_{k=-\infty}^{\infty}|\Delta_k\gamma|<\infty$, where $\Delta_k\gamma=\gamma_k-\gamma_{k+1}$, then $\gamma=\{\gamma_k\}_{k=-\infty}^{\infty}$ is a multiplier from ${M_p^{\lambda}}(\mathbb T)$ to ${M_p^{\lambda}}(\mathbb T)$. 
\end{theorem}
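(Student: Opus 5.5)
The plan is to reduce the multiplier $T_\gamma$ to an absolutely convergent combination of modulated partial sum operators, each bounded on $M_p^\lambda(\mathbb T)$ uniformly by Theorem~\ref{theoremMarsel-Riesz}. This is the Marcinkiewicz-type argument via Abel summation.

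\emph{Limits of $\gamma$.} Since $\sum_k|\Delta_k\gamma|<\infty$, the limits $\gamma_{+\infty}=\lim_{k\to+\infty}\gamma_k$ and $\gamma_{-\infty}=\lim_{k\to-\infty}\gamma_k$ exist. Moreover $\sup_k|\gamma_k|\le |\gamma_{+\infty}|+\sum_k|\Delta_k\gamma|$.

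\emph{Abel summation.} For $k\ge0$ we write $\gamma_k=\gamma_{+\infty}+\sum_{j\ge k}\Delta_j\gamma$. For $k<0$ we write $\gamma_k=\gamma_{-\infty}-\sum_{j<k}\Delta_j\gamma$. Formally this gives
$$T_\gamma f=\gamma_{+\infty}P_+f+\gamma_{-\infty}P_-f+\sum_{j\ge0}\Delta_j\gamma\, S_{[0,j]}f-\sum_{j<0}\Delta_j\gamma\, S_{[j+1,-1]}f.$$
Here $S_{[a,b]}f=\sum_{a\le m\le b}\hat f_m e^{2\pi imx}$, and $P_\pm$ are the projections onto nonnegative and negative frequencies, respectively.

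\emph{Reduction to Theorem~\ref{theoremMarsel-Riesz}.} Each $S_{[a,b]}$ is a conjugated symmetric partial sum. With $c=\lfloor (a+b)/2\rfloor$ and $n$ the appropriate half-length, we have $S_{[a,b]}f=e^{2\pi icx}S_n(e^{-2\pi icx}f)$, up to one boundary frequency when $b-a$ is odd. That extra term is itself a difference of two such operators or a single rank-one term. A rank-one term $\hat f_m e^{2\pi imx}$ is trivially bounded, since $|\hat f_m|\le\|f\|_{L_1}\le C\|f\|_{M_p^\lambda}$ on the torus. Multiplication by a unimodular function preserves the $M_p^\lambda$ norm. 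Hence $\|S_{[a,b]}\|_{M_p^\lambda\to M_p^\lambda}\le C$, provided the bound in Theorem~\ref{theoremMarsel-Riesz} is \emph{uniform in $n$}.

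\emph{Uniformity in $n$ (main obstacle).} This is the step I expect to be the main obstacle, because Theorem~\ref{theoremMarsel-Riesz} as stated does not record the dependence of its constant on $n$. I would revisit its proof and check two things. First, the $L_p$ bound for $S_n$ is uniform in $n$ by M.~Riesz's theorem. Second, the kernel estimate $|D_n(t)|\le c/|t|$ holds with $c$ independent of $n$. The constant in Theorem~5.1 of \cite{B. Ch. N. } depends only on these two quantities together with $p$ and $\lambda$, so the resulting Morrey bound is uniform in $n$.

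\emph{The projections $P_\pm$.} For $P_+$, the partial sums $S_{[0,N]}f$ converge to $P_+f$ in $L_p(\mathbb T)$, so a subsequence converges a.e. By Fatou's lemma applied on each ball,
$$\|P_+f\|_{M_p^\lambda}\le\liminf_N\|S_{[0,N]}f\|_{M_p^\lambda}\le C\|f\|_{M_p^\lambda}.$$
The same argument applies to $P_-$.

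\emph{Convergence and identification of $f_\gamma$.} The series in the Abel summation formula converges absolutely in the Banach space $M_p^\lambda(\mathbb T)$, with
$$\|T_\gamma f\|_{M_p^\lambda}\le C\Big(|\gamma_{+\infty}|+|\gamma_{-\infty}|+\sum_k|\Delta_k\gamma|\Big)\|f\|_{M_p^\lambda}.$$
Convergence in $M_p^\lambda(\mathbb T)$ implies convergence in $L_1(\mathbb T)$, so Fourier coefficients may be computed termwise. For $m\ge0$ the $m$-th coefficient of the sum is
$$\Big(\gamma_{+\infty}+\sum_{j\ge m}\Delta_j\gamma\Big)\hat f_m=\gamma_m\hat f_m,$$
and similarly for $m<0$. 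Thus $f_\gamma:=T_\gamma f$ has Fourier series $\sum_k\gamma_k\hat f_k e^{2\pi ikx}$, so $\gamma$ is a multiplier from $M_p^\lambda(\mathbb T)$ to $M_p^\lambda(\mathbb T)$.
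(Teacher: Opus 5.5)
Your proposal is correct and follows the same route as the paper: Abel summation rewrites $T_\gamma f$ as an absolutely convergent series of partial sums with coefficients $\Delta_j\gamma$, and each partial sum is bounded on $M_p^\lambda(\mathbb T)$ by Theorem~\ref{theoremMarsel-Riesz}. Your version is more careful than the paper's proof, which leaves three points unjustified: it reduces to nonnegative frequencies without showing that the Riesz projection is bounded on $M_p^\lambda$, it tacitly assumes $\gamma_k\to 0$ (its final bound fails for constant $\gamma$), and it uses uniformity in $n$ of the bound in Theorem~\ref{theoremMarsel-Riesz} without comment; your terms $\gamma_{\pm\infty}P_\pm f$, the Fatou argument, and the uniformity check supply exactly these points.
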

\begin{proof}
Without loss of generality, let us set $f=\displaystyle\sum_{k=0}^{\infty}\hat f_ke^{2\pi ikx}$. Then 
\begin {equation*}
\begin{split}
\|f^{\alpha}\|_{M_p^{\lambda}(\mathbb T)}=\left\|\displaystyle\sum_{k=0}^{\infty}\gamma_k \hat f_k e^{2\pi ikx}\right\|_{M_p^{\lambda}(\mathbb T)}=\left\|\displaystyle\sum_{k=0}^{\infty}\displaystyle\sum_{m=k}^{\infty}\left(\gamma_m-\gamma_{m-1}\right)\hat f_k e^{2\pi ikx}\right\|_{M_p^{\lambda}(\mathbb T)}=\\
=\left\|\displaystyle\sum_{m=0}^{\infty}\left(\gamma_m-\gamma_{m+1}\right)\displaystyle\sum_{k=0}^{m}\hat f_k e^{2\pi ikx}\right\|_{M_p^{\lambda}(\mathbb T)}\le\displaystyle\sum_{m=0}^{\infty}|\gamma_m-\gamma_{m+1}| \|S_n(f, \cdot)\|_{M_p^{\lambda}(\mathbb T)}.
\end{split}
\end{equation*}
Applying theorem\,\,\ref{theoremMarsel-Riesz}, we obtain 
\begin {equation*}
\begin{split}
\|f^{\alpha}\|_{M_p^{\lambda}(\mathbb T)}\le\displaystyle\sum_{m=1}^{\infty}|\gamma_m-\gamma_{m+1}| \|f\|_{M_p^{\lambda}(\mathbb T)} 
\end{split}
\end{equation*}
\end{proof}

\begin{theorem}\label{theoremVlozh}
Let \,\,\,$0<p\le\infty$, $0\le\lambda\le\frac{1}{p}$, $\alpha\in\mathbb R$. Then 
$$B_1^\alpha\left(M_p^\lambda\right)\hookrightarrow W^\alpha\left(M_p^\lambda\right)\hookrightarrow B_\infty^\alpha\left(M_p^\lambda\right).$$
\end{theorem}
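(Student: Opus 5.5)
The plan is to prove both embeddings by comparing, block by block, the dyadic pieces of $f$ and of $f^\alpha$. The first embedding will follow from the triangle inequality applied to $f^\alpha=\sum_k \Delta_k f^\alpha$, and the second from a uniform estimate of $\Delta_k$ applied to $f^\alpha$. The core tool is a block estimate: for every trigonometric polynomial $g$ with spectrum in $Q_k\setminus Q_{k-1}$,
$$\|g^{\alpha}\|_{M_p^\lambda}\asymp 2^{\alpha k}\|g\|_{M_p^\lambda},$$
with constants independent of $k$. Here $g^\alpha$ has coefficients $\bar m^{\alpha}\hat g_m$.

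\emph{Block estimate for $1<p<\infty$, $0\le\lambda<\frac1p$.} On the block the sequence $\gamma^{(k)}_m=2^{-\alpha k}\bar m^{\alpha}$ takes values comparable to $1$. Its total variation $\sum_m|\gamma^{(k)}_m-\gamma^{(k)}_{m+1}|$ over the block is $O(1)$ uniformly in $k$, because $\bar m^\alpha$ is monotone there. We extend it by constants outside the block, which only adds two jumps of size $O(1)$. Theorem~\ref{theoremMultipl} then gives one direction, and applying it to $(\gamma^{(k)})^{-1}$ gives the other.

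\emph{Block estimate for the remaining range: $p\le 1$, $p=\infty$, or $\lambda=\frac1p$.} Here Theorem~\ref{theoremMultipl} is not available. In the case $\lambda=\frac1p$ we have $M_p^{1/p}=L_\infty$, so this case and $p=\infty$ reduce to $L_\infty$ estimates. Instead of the multiplier theorem I will represent the action on the block as a convolution, $g^\alpha=K_k*g$. The kernel $K_k$ is of de la Vallée Poussin type: it has Fourier coefficients $\bar m^{\alpha}$ on $Q_k\setminus Q_{k-1}$, a smooth dyadic cutoff, and satisfies $\|K_k\|_{L_1(\mathbb T)}\le C2^{\alpha k}$.

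For $p\ge1$, Minkowski's integral inequality together with the translation invariance of the Morrey norm gives $\|K*g\|_{M_p^\lambda}\le\|K\|_{L_1}\|g\|_{M_p^\lambda}$. For $0<p<1$, I will use instead a Peetre maximal-function (Nikol'skii-type) estimate for trigonometric polynomials of degree $\le 2^{k}$: $|K_k*g(x)|\le C2^{\alpha k}\,\big(\mathcal M(|g|^r)(x)\big)^{1/r}$ with $r<p$. Then I will use the boundedness of the Hardy–Littlewood maximal operator on $M_{p/r}^{\lambda r}$, which is the known Chiarenza–Frasca type result.

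\emph{The two embeddings.} The first embedding follows directly from the block estimate and the ($\min(1,p)$-)triangle inequality:
$$\|f^\alpha\|_{M_p^\lambda}\le C\sum_k\|(\Delta_kf)^\alpha\|_{M_p^\lambda}\le C\sum_k2^{\alpha k}\|\Delta_kf\|_{M_p^\lambda}.$$
For $p<1$ the $p$-triangle inequality gives an $\ell_p$ sum rather than an $\ell_1$ sum, so this display has to be argued differently there; I would pass through the maximal-function bound above, applied to the series $\sum_k\Delta_k f$, rather than estimating block by block. For the second embedding, $\Delta_k f=(\Delta_k f^\alpha)^{-\alpha}$, so the block estimate with $-\alpha$ gives $2^{\alpha k}\|\Delta_kf\|\le C\|\Delta_kf^\alpha\|$. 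It therefore suffices to prove $\sup_k\|\Delta_k h\|_{M_p^\lambda}\le C\|h\|_{M_p^\lambda}$ for $h=f^\alpha$.

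\emph{Main obstacle.} The hard step is this last uniform bound outside $1<p<\infty$, $\lambda<\frac1p$. In that range it is immediate from Theorem~\ref{theoremMarsel-Riesz}, since $\Delta_k=S_{2^k}-S_{2^{k-1}}$ up to the conjugate half. For $p\le1$ or $L_\infty$, sharp Dirichlet cutoffs are unbounded. There I would first try to control the block of $h$ through the smoothed piece $V_k*h$, with $V_k$ a de la Vallée Poussin kernel, and then estimate the sharp block of that polynomial by its degree, using the Nikol'skii/Peetre inequality from the block estimate. If this does not close, that is precisely the point where the endpoint cases would need the smoothed decomposition.
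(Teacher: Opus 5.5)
On the range $1<p<\infty$, $0\le\lambda<\frac1p$ your argument is the paper's argument, and it is complete there. The block multipliers $2^{-\alpha k}\bar m^{\alpha}$ and $2^{\alpha k}\bar m^{-\alpha}$ have variation $O(1)$ uniformly in $k$. Theorem~\ref{theoremMultipl} turns this into $\|\Delta_k f^\alpha\|_{M_p^\lambda}\asymp 2^{\alpha k}\|\Delta_k f\|_{M_p^\lambda}$. The triangle inequality then gives the first embedding, and Theorem~\ref{theoremMarsel-Riesz} applied to $\Delta_k=S_{2^k}-S_{2^{k-1}}$ gives the second. Your convolution form of the block estimate, $\|K*g\|_{M_p^\lambda}\le\|K\|_{L_1}\|g\|_{M_p^\lambda}$ with smooth-cutoff kernels, is also sound. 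It genuinely extends the first embedding to all $1\le p\le\infty$, $0\le\lambda\le\frac1p$, which the paper's multiplier theorem cannot reach.

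The gap is the rest of the stated range, and it cannot be closed. With the sharp blocks $\Delta_k$ used in the definition, the second embedding is false at the $L_\infty$ and $L_1$ endpoints, and the first is false for $p<1$.

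\emph{The $L_\infty$ endpoint} ($p=\infty$, or $\lambda=\frac1p$). The kernel $K_k(x)=\sum_{2^{k-1}<|m|\le 2^k}e^{2\pi i m x}$ of $\Delta_k$ is real and even, with $\|K_k\|_{L_1}\asymp k$. Take $h_k=\mathrm{sign}\,K_k$ and define $f$ by $f^\alpha=h_k$. Your own block estimate gives $2^{\alpha k}\|\Delta_k f\|_\infty\ge c\|\Delta_k h_k\|_\infty\ge c\|K_k\|_{L_1}\ge c'k$, while $\|f\|_{W^\alpha(L_\infty)}=1$. The same construction works in $L_1$ ($p=1$, $\lambda=0$).

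\emph{Why smoothing does not help.} If $\hat V_k=1$ on $Q_k$, then $\Delta_k(V_k*h)=\Delta_k h$. Moreover, $V_k*h_k$ is a polynomial of degree $\le 2^{k+1}$ with $\|V_k*h_k\|_\infty\lesssim 1$ and the same large block. Nikol'skii or Peetre inequalities only compare norms of one polynomial, so they cannot make the sharp projection bounded.

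\emph{The case $0<p<1$, $\lambda=0$.} Take $g_1,\dots,g_K$ with spectra in distinct blocks, $\|g_j\|_p=1$, essentially concentrated on disjoint intervals (for instance, modulated translates of a rapidly decaying smooth-cutoff kernel at scale $2^{-k_j}$). Then $\|\sum_j g_j\|_p\ge cK^{1/p}$, while $\sum_k\|\Delta_k\sum_j g_j\|_p=K$. The ratio $K^{1/p-1}$ is unbounded, so no maximal-function argument can yield the $\ell_1$ bound.

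The paper's proof does not treat these cases either: it invokes Theorems~\ref{theoremMarsel-Riesz} and~\ref{theoremMultipl} outside their hypotheses $1<p<\infty$, $0\le\lambda<\frac1p$. The right move is to restrict the theorem to that range, adding the first embedding for $1\le p\le\infty$. Alternatively, as you suspected at the end, the spaces can be redefined with a smooth dyadic decomposition.
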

\begin{proof} 
We will show the embedding 
\begin{equation*}
B_1^\alpha\left(M_p^\lambda\right)\hookrightarrow W^\alpha\left(M_p^\lambda\right).
\end{equation*}
Let $f\in B_1^{\alpha}(M_p^{\lambda})$.\,\,\,Its trigonometric series is given by $f=\displaystyle\sum_{m\in \mathbb{Z}}a_me^{2 \pi im x}$.

By using the relations $$\Delta_kf_\gamma^\alpha=2^{k\alpha}\displaystyle\sum_{m=2^k}^{2^{k+1}}\frac{{\bar m}^\alpha}{2^{k\alpha}}a_m e^{2\pi imx}$$
and 
\begin{equation*}
\begin{split}
\frac{1}{2^{k\alpha}}\displaystyle\sum_{m=2^k}^{2^{k+1}}|m^\alpha-(m+1)^\alpha|\asymp\frac1{2^{k\alpha}}\displaystyle\sum_{m=2^k}^{2^{k+1}}m^{\alpha-1}=\\
=\frac1{2^{k\alpha}}\left(2^{(k+1)\alpha}-2^{k\alpha}\right)\thickapprox 1
\end{split}
\end{equation*}
Using the estimate $\|\Delta_k f^\alpha\|\le2^{k\alpha}\|\Delta_k f\|$ and we apply theorem \,\,\ref{theoremMultipl}  on multipliers
\begin{equation*}
\begin{split}
\|f\|_{W^{\alpha}(M_p^\lambda)}=\|f^\alpha\|_{M_p^\lambda}=\|\displaystyle\sum_{k=0}^{\infty}\Delta_k f^\alpha\|_{M_p^\lambda}\le\\
\le\displaystyle\sum_{k=0}^{\infty}\|\Delta_k f^\alpha\|_{M_p^\lambda}\le \displaystyle\sum_{k=0}^{\infty}2^{k\alpha}\|\Delta_k f\|_{M_p^\lambda}=\|f\|_{B_1^\alpha(M_p^{\lambda})},
\end{split}
\end{equation*}

We now prove the embedding 
$$W^{\alpha}(M_p^{\lambda})\hookrightarrow B_{\infty}^{\alpha}(M_p^{\lambda}).$$
Let $f\in W^{\alpha}(M_p^{\lambda})$,\,\,\,Its trigonometric series is given by $f=\sum_{m\in \mathbb{Z}}a_me^{2 \pi im x}$.
 
 Using the relations below
\begin{equation*}
\begin{split}
2^{k\alpha}|\Delta_k f|=2^{k\alpha}\displaystyle\sum_{m=2^k}^{2^{k+1}}\hat f_m e^{2\pi imx}=\\
=\displaystyle\sum_{m=2^k}^{2^{k+1}}\frac{2^{k\alpha}}{\bar{m}^{\alpha}}\bar{m}^{\alpha}a_me^{2\pi imx},
\end{split}
\end{equation*}
 
Further, we have 
\begin{equation*}
\begin{split}
2^{k\alpha}\displaystyle\sum_{m=2^k}^{2^{k+1}}|\frac1{m^\alpha}-\frac1{(m+1)^\alpha}|\asymp 2^{k\alpha}\displaystyle\sum_{m=2^k}^{2^{k+1}}m^{-1-\alpha}\asymp\\
\asymp2^{k\alpha}(-2^{-(k+1)\alpha}+2^{-k\alpha})\asymp1.
\end{split}
\end{equation*}
According to Theorem \,\,\ref{theoremMultipl} on multipliers,  where \,\,$\left\{\frac{2^{k\alpha}}{\bar{m}^{\alpha}}\right\}_{m=2^k}^{2^{k+1}}$ is a multiplier from $M_p^{\lambda}(\mathbb T)$ to $M_p^{\lambda}(\mathbb T)$, therefore we have 
\begin{equation*}
\begin{split}
\|f\|_{B_{\infty}^{\alpha}(M_p^\lambda)}=\sup_{k\in \mathbb Z_+}2^{-k\alpha}\|\Delta_k f\|_{M_p^\lambda}\le\\
\le\sup_{k\in \mathbb Z_+}\|\Delta_k f^\alpha\|_{M_p^\lambda}\lesssim \|f^\alpha\|_{M_p^\lambda}=\\
=\|f\|_{W^\alpha(M_p^\lambda)}. 
\end{split}
\end{equation*}
We have also applied Theorem \,\,\ref{theoremMarsel-Riesz}. 
\end{proof}

\begin{theorem}\label{thInterpol}
Let $0<\beta_1<\beta_0<\infty$, $0<q_0,q_1,h\le\infty$, \,\,$0<p\le\infty$,\,\,$0\le\lambda\le\frac1{p}$ and
$$\beta=(1-\theta)\beta_0+\theta\beta_1.$$
We have
$$\left(B_{q_0}^{\beta_0}\left(M_p^{\lambda}\right),B_{q_1}^{\beta_1}\left(M_p^{\lambda}\right)\right)_{\theta,h}=B_{h}^{\beta}\left(M_p^{\lambda}\right).$$
\end{theorem}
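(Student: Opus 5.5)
The plan is the standard retraction/coretraction argument. It reduces the statement to the classical interpolation of weighted sequence spaces $\ell_q^{s}(X)$, where $X$ is a fixed quasi-Banach space. Here $X=M_p^\lambda(\mathbb T)$, and
$$\ell_q^{s}(X)=\Bigl\{(g_k)_{k\ge0}\subset X:\ \|(g_k)\|_{\ell_q^s(X)}=\Bigl(\sum_{k}\bigl(2^{sk}\|g_k\|_X\bigr)^q\Bigr)^{1/q}<\infty\Bigr\}.$$
The known result (see \cite{Bennet}; it follows from a direct computation of the $K$-functional) is
$$\bigl(\ell_{q_0}^{\beta_0}(X),\ell_{q_1}^{\beta_1}(X)\bigr)_{\theta,h}=\ell_h^{\beta}(X),\qquad \beta=(1-\theta)\beta_0+\theta\beta_1,\quad \beta_0\neq\beta_1,$$
and it holds for all $0<q_0,q_1,h\le\infty$. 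I will either cite it or reprove it quickly. To reprove it, I would estimate
$$K(t,(g_k))\asymp\Bigl(\sum_k \min\bigl(2^{\beta_0k},t2^{\beta_1k}\bigr)^{\tilde q}\|g_k\|_X^{\tilde q}\Bigr)^{1/\tilde q}$$
from above and below. I would first do this in the model case $q_0=q_1$, and then pass to general $q_0,q_1$ by the embeddings $\ell_{q}^{s}\hookrightarrow\ell_\infty^s$ and $\ell^{s}_{\infty}\hookrightarrow \ell^{s-\varepsilon}_{r}$ together with the reiteration theorem. Discretizing $t=2^{(\beta_0-\beta_1)j}$ then gives the weight $2^{\beta k}$ in the $\ell_h$ norm.

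Next I set up the retraction. Define $J f=(\Delta_k f)_{k\ge0}$. By the definition of $B_q^\beta(M_p^\lambda)$, $J$ is an isometry from $B_{q}^{\beta}(M_p^\lambda)$ into $\ell_q^\beta(X)$ for every $q,\beta$, so it is bounded simultaneously for both endpoint couples. For the left inverse I use a slightly enlarged block $\tilde\Delta_k=\Delta_{k-1}+\Delta_k+\Delta_{k+1}$, and define $P\bigl((g_k)\bigr)=\sum_k\tilde\Delta_k g_k$ as a formal trigonometric series. Then $PJf=f$, because $\tilde\Delta_k\Delta_k=\Delta_k$. Also $\Delta_j P(g)=\sum_{|k-j|\le2}\Delta_j\tilde\Delta_k g_k$, and each $\Delta_j$ and $\tilde\Delta_k$ is a difference of partial sum operators (up to modulation). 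By Theorem~\ref{theoremMarsel-Riesz} these operators are uniformly bounded on $M_p^\lambda$ when $1<p<\infty$ and $\lambda<1/p$. Hence
$$\|\Delta_jPg\|_{M_p^\lambda}\lesssim\sum_{|k-j|\le 2}\|g_k\|_{M_p^\lambda},$$
which gives $\|Pg\|_{B_q^{s}(M_p^\lambda)}\lesssim\|g\|_{\ell_q^{s}(X)}$, with constants depending only on $s$ and $q$.

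Since $J$ and $P$ are bounded for both endpoint couples and $PJ=\mathrm{id}$, the interpolation property of the real method gives
$$\|f\|_{(B_{q_0}^{\beta_0},B_{q_1}^{\beta_1})_{\theta,h}}\asymp\|Jf\|_{(\ell^{\beta_0}_{q_0}(X),\ell^{\beta_1}_{q_1}(X))_{\theta,h}}\asymp\|Jf\|_{\ell_h^\beta(X)}=\|f\|_{B_h^\beta(M_p^\lambda)}.$$
The first equivalence uses $f=PJf$ in one direction and the boundedness of $J$ in the other; $J$ and $P$ are bounded on the interpolation spaces because the real method $(\cdot,\cdot)_{\theta,h}$ is an exact functor for quasi-Banach couples. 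This proves the theorem.

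The main obstacle is the boundedness of $P$, more precisely the uniform boundedness of the dyadic block projections on $M_p^\lambda$. Theorem~\ref{theoremMarsel-Riesz} only provides this for $1<p<\infty$ and $\lambda<1/p$, while the statement allows $0<p\le\infty$ and $\lambda\le 1/p$. To avoid the issue, I would instead use smooth Littlewood--Paley blocks (de la Vallée Poussin type kernels) in the definition of $P$. Their kernels are in $L_1$ with uniformly bounded norm, so convolution with them is bounded on $M_p^\lambda$ for $p\ge1$ by Minkowski's inequality, since the Morrey norm is translation invariant. Because the sharp blocks $\Delta_j$ are still applied in the norm, I would also observe that only $\Delta_j$ composed with finitely many neighbouring blocks occurs. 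If that is not enough in the range $p\le1$, I would restrict the claim there to the range where Theorem~\ref{theoremMarsel-Riesz} applies.
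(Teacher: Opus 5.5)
\textbf{The gap is in the retraction.} Your map $P((g_k))=\sum_k\tilde\Delta_k g_k$ acts on arbitrary $g_k\in M_p^\lambda$. So the bound $\|Pg\|_{B_q^s(M_p^\lambda)}\lesssim\|g\|_{\ell_q^s(X)}$ amounts to uniform boundedness of the sharp blocks $\Delta_j\tilde\Delta_k$ on $M_p^\lambda$. Outside the range of Theorem~\ref{theoremMarsel-Riesz} this is not just unproved but false:
\begin{itemize}
\item For $\lambda=1/p$ one has $M_p^{1/p}(\mathbb T)=L_\infty(\mathbb T)$, and for $p=1$, $\lambda=0$ one has $L_1(\mathbb T)$. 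On both, the norm of $\Delta_k$ is the $L_1$ norm of its kernel $D_{2^k}-D_{2^{k-1}}$, which grows like $k$.
\item Take $g$ with a single nonzero entry $g_k$ satisfying $\|\Delta_k g_k\|\ge c\,k\,\|g_k\|$. Then $\|Pg\|/\|g\|\to\infty$.
\item For $p<1$ (e.g.\ $\lambda=0$), a general $g_k\in M_p^\lambda$ need not be integrable, so $\tilde\Delta_k g_k$ is not even defined.
\end{itemize}
The de la Vall\'ee Poussin fallback does not help. The Besov--Morrey norm still applies the sharp $\Delta_j$, and $\Delta_j$ composed with a smooth block is again a multiplier with jumps, with the same logarithmic growth. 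Restricting to $1<p<\infty$, $\lambda<1/p$ does not prove the theorem as stated.

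\textbf{The repair} is to stop projecting from a fixed space $X$. Let $X_k\subset M_p^\lambda$ be the finite-dimensional space of trigonometric polynomials with spectrum in $Q_k\setminus Q_{k-1}$. Then $J$ is an isometric isomorphism of $B_q^s(M_p^\lambda)$ onto $\ell_q^s(\{X_k\})$, with inverse $(g_k)\mapsto\sum_k g_k$. The spectra are disjoint, so $\Delta_j\sum_k g_k=g_j$, and no multiplier theorem is needed. The sequence-space interpolation result holds verbatim for varying component spaces, because both $K$-functional estimates are coordinatewise:
\begin{itemize}
\item For the lower bound you only use the quasi-triangle inequality $\|g_k\|\le C(\|a_k\|+\|b_k\|)$ for an arbitrary splitting.
\item For the upper bound you put each $g_k$ entirely into the first or the second space, according to whether $2^{(\beta_0-\beta_1)k}\le t$.
\end{itemize}

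The upper-bound choice is exactly the paper's $f_0=S_{2^l}f$, $f_1=f-f_0$, with $t=2^{(\beta_0-\beta_1)l}$. The dyadic blocks of $f_0$ are $\Delta_k f$ for $k\le l$ and $0$ otherwise. So the paper never needs $S_{2^l}$ to be bounded on $M_p^\lambda$, and its argument covers all $0<p\le\infty$, $0\le\lambda\le1/p$. The paper closes with a discrete Hardy inequality, with $\tau=\min(q_0,q_1,h)$, where you would use reiteration. With this change your route and the paper's coincide in substance; yours simply packages the $K$-functional computation as a cited sequence-space theorem.
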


\begin{proof}[\bf \indent Proof]
Let $f\in \left(B_{q_0}^{\beta_0}\left(M_p^{\lambda}\right),B_{q_1}^{\beta_1}\left(M_p^{\lambda}\right)\right)_{\theta,h}$. Take any representation $f=f_0+f_1$ such that $f_0\in B_{q_0}^{\beta_0}\left(M_p^{\lambda}\right)$ and $f_1\in B_{q_1}^{\beta_1}\left(M_p^{\lambda}\right)$. Then for any $k\in\mathbb Z_+$ we get

\begin{equation*}
\begin{split}
2^{\beta k}\|\Delta_kf\|_{M_p^\lambda}&\le2^{(\beta-\beta_0)k}\left(2^{\beta_0k}\|\Delta_kf\|_{M_p^\lambda}+2^{(\beta_0-\beta_1)k}2^{\beta_1k}\|\Delta_kf\|_{M_p^\lambda}\right)\\
&\le2^{(\beta-\beta_0)k}\left(\|f\|_{B_\infty^{\beta_0}(M_p^\lambda)}+2^{(\beta_0-\beta_1)k}\|f\|_{B_\infty^{\beta_1}(M_p^{\lambda})}\right).
\end{split}
\end{equation*}
Using $$B_\infty^{\beta}(M_p^{\lambda})\hookleftarrow B_q^{\beta}(M_p^{\lambda}), \forall 0<q\le\infty$$
$$2^{\beta k}\|\Delta_kf\|_{M_p^\lambda}\le 2^{(\beta-\beta_0)k}\left(\|f\|_{B_{q_0}^{\beta_0}(M_p^{\lambda})}+2^{(\beta_0-\beta_1)k}\|f\|_{B_{q_1}^{\beta_1}(M_p^{\lambda})}\right)$$
Taking into account our choice of $f_0$ and $f_1$, by definition of the $K$-functional $K(f,t):=K\left(f,t;B_{q_0}^{\beta_0}\left(M_p^{\lambda}\right),B_{q_1}^{\beta_1}\left(M_p^{\lambda}\right)\right)$, we have
$$ 2^{\beta k}\|\Delta_kf\|_{M_p^\lambda}\lesssim 2^{(\beta-\beta_0)k}K\left(f,2^{(\beta_0-\beta_1)k}\right)=2^{-\theta(\beta_0-\beta_1)k}K\left(f,2^{(\beta_0-\beta_1)k}\right).$$
Therefore,
\begin{equation*}
\begin{split}
\|f\|_{B_{h}^\beta\left(M_p^\lambda\right)}&\lesssim \left(\sum_{k=0}^\infty\left(2^{-\theta(\beta_0-\beta_1)k}K\left(f,2^{(\beta_0-\beta_1)k}\right)\right)^h\right)^{\frac1h}\\
&\asymp\left(\int\limits_{1}^\infty\left(t^{-\theta(\beta_0-\beta_1)}K\left(f,t^{\beta_0-\beta_1}\right)\right)^h\frac{dt}t\right)^{\frac1h}\\
&\lesssim \left(\int\limits_{1}^\infty\left(t^{-\theta}K\left(f,t\right)\right)^h\frac{dt}t\right)^{\frac1h}\le \|f\|_{\left(B_{q_0}^{\beta_0}\left(M_p^{\lambda}\right),B_{q_1}^{\beta_1}\left(M_p^{\lambda}\right)\right)_{\theta,h}},
\end{split}
\end{equation*}
i.e.,
$$\left(B_{q_0}^{\beta_0}\left(M_p^{\lambda}\right),B_{q_1}^{\beta_1}\left(M_p^{\lambda}\right)\right)_{\theta,h}\hookrightarrow B_{h}^{\beta}\left(M_p^{\lambda}\right).$$
Let us show the inverse embedding. Consider now
\begin{equation*}
\begin{split}
\|f\|_{\left(B_{q_0}^{\beta_0}\left(M_p^{\lambda}\right),B_{q_1}^{\beta_1}\left(M_p^{\lambda}\right)\right)_{\theta,h}}&=\left(\int\limits_0^\infty\left(t^{\theta}K(f,t)\right)^h\frac{dt}{t}\right)^{\frac1h}\\
&=\frac1{(\beta_0-\beta_1)^{\frac1h}}\left(\int\limits_0^\infty\left(t^{-\theta(\beta_0-\beta_1)}K\left(f,t^{\beta_0-\beta_1}\right)\right)^h\frac{dt}t\right)^{\frac1h}.
\end{split}
\end{equation*}
Since 
$$\displaystyle K\left(f,t^{\beta_0-\beta_1}\right)=\inf_{f=f_0+f_1}\left(\|f_0\|_{B_{q_0}^{\beta_0}\left(M_p^\lambda\right)}+t^{\beta_0-\beta_1} \|f_1\|_{B_{q_1}^{\beta_1}\left(M_p^\lambda\right)} \right)\le t^{\beta_0-\beta_1}\|f\|_{B_{q_1}^{\beta_1}\left(M_p^\lambda\right)},$$
have
\begin{equation*}
\begin{split}
\|f\|_{\left(B_{q_0}^{\beta_0}\left(M_p^{\lambda}\right),B_{q_1}^{\beta_1}\left(M_p^{\lambda}\right)\right)_{\theta,h}}&\lesssim\left[\int\limits_0^1\left(t^{-\theta(\beta_0-\beta_1)}t^{\beta_0-\beta_1}\|f\|_{B_{q_1}^{\beta_1}\left(M_p^\lambda\right)}\right)^h\frac{dt}t\right.\\
&+\left.\int\limits_1^\infty\left(t^{-\theta(\beta_0-\beta_1)}K\left(f,t^{\beta_0-\beta_1}\right)\right)^h\frac{dt}t\right]^{\frac1h}\\
&\asymp\|f\|_{B_{q_1}^{\beta_1}\left(M_p^\lambda\right)}+\left(\sum_{k=0}^\infty\left(2^{-\theta(\beta_0-\beta_1)k}K\left(f,2^{(\beta_0-\beta_1)k}\right)\right)^h\right)^{\frac1h}.
\end{split}
\end{equation*}
In view of $\beta_1<\beta$, we get $\|f\|_{B_{q_1}^{\beta_1}\left(M_p^\lambda\right)}\lesssim \|f\|_{B_{h}^{\beta}\left(M_p^\lambda\right)}$. 

Let $f\in B_{h}^{\beta}\left(M_p^{\lambda}\right)$, $\tau=\min\left(q_0,q_1,h\right)$, and $\beta\in\mathbb Z^+$. Define $f_0$ and $f_1$ as follows:
$$f_0(x):=S_{2^l}f(x)=\sum_{k=-2^l}^{2^l}a_ke^{2\pi ikx}$$
and
$$f_1(x):=f(x)-f_0(x)=\sum_{|k|>2^l}a_k e^{2\pi ikx}.$$
Then by Jensen’s inequality
\begin{equation*}
\begin{split}
\|f_0\|_{B_{q_0}^{\beta_0}\left(M_p^\lambda\right)}&=\left(\sum_{k=0}^\infty\left(2^{\beta_0k}\|\Delta_k f_0\|_{M_p^\lambda}\right)^{q_0}\right)^{\frac1{q_0}}\le \left(\sum_{k=0}^\infty\left(2^{\beta_0k}\|\Delta_k f_0\|_{M_p^\lambda}\right)^{\tau}\right)^{\frac1{\tau}}\\
&= \left(\sum_{k=0}^l\left(2^{\beta_0k}\|\Delta_k f\|_{M_p^\lambda}\right)^{\tau}\right)^{\frac1{\tau}}
\end{split}
\end{equation*}
and
\begin{equation*}
\begin{split}
\|f_1\|_{B_{q_1}^{\beta_1}\left(M_p^\lambda\right)}&\lesssim 2^{\beta_1l}\|\Delta_k f_1\|_{M_p^\lambda}+\left(\sum_{k=l+1}^\infty\left(2^{\beta_1k}\|\Delta_k f_1\|_{M_p^\lambda}\right)^{\tau}\right)^{\frac1{\tau}}\\
&\lesssim \left(\sum_{k=0}^l\left(2^{\beta_1k}\|\Delta_k f_1\|_{M_p^\lambda}\right)^{\tau}\right)^{\frac1{\tau}}+\left(\sum_{k=l+1}^\infty\left(2^{\beta_1k}\|\Delta_k f_1\|_{M_p^\lambda}\right)^{\tau}\right)^{\frac1{\tau}}\\
&=\left(\sum_{k=l+1}^\infty\left(2^{\beta_1k}\|\Delta_k f\|_{M_p^\lambda}\right)^{\tau}\right)^{\frac1{\tau}}.
\end{split}
\end{equation*}
 Then, using
$$K\left(f,2^{(\beta_0-\beta_1)k}\right)\le\|f_0\|_{B_{q_0}^{\beta_0}\left(M_p^{\lambda}\right)}+2^{(\beta_0-\beta_1)k}\|f_1\|_{B_{q_1}^{\beta_1}\left(M_p^{\lambda}\right)}$$
and the above estimates, we have
\begin{equation*}
\begin{split}
\|f\|_{\left(B_{q_0}^{\beta_0}\left(M_p^{\lambda}\right),B_{q_1}^{\beta_1}\left(M_p^{\lambda}\right)\right)_{\theta,h}}&\lesssim\|f\|_{B_{h}^{\beta}\left(M_p^{\lambda}\right)}+\left(\sum_{l=0}^\infty2^{-\theta(\beta_0-\beta_1)hl}\left\{\left[\sum_{k=0}^l\left(2^{\beta_0k}\|f\|_{M_p^\lambda}\right)^\tau\right]^{\frac1\tau}\right.\right.\\
&\left.\left.+2^{(\beta_0-\beta_1)l}\left[\sum_{k=l+1}^\infty\left(2^{\beta_1k}\|f\|_{M_p^\lambda}\right)^\tau\right]^{\frac1\tau}\right\}^h\right)^{\frac1h}.
\end{split}
\end{equation*}
Further, since $\tau\le h$ we apply Hardy’s inequality  \cite{ND}  to get
$$\|f\|_{\left(B_{q_0}^{\beta_0}\left(M_p^{\lambda}\right),B_{q_1}^{\beta_1}\left(M_p^{\lambda}\right)\right)_{\theta,h}}\lesssim\|f\|_{B_{h}^{\beta}\left(M_p^{\lambda}\right)}.$$
This completes the proof.
\end{proof}

\begin{theorem}
Let $0<\beta_1<\beta_0<\infty$, $0<p,q\le\infty$, $1\le\lambda\le\frac1{p}$  and
$$\beta=(1-\theta)\beta_0+\theta\beta_1.$$
Then the following interpolation identity holds
$$\left(W^{\beta_0}\left(M_p^\lambda\right),W^{\beta_1}\left(M_p^\lambda\right)\right)_{\theta,q}=B_q^\beta\left(M_p^\lambda\right).$$
\end{theorem}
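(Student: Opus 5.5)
The plan is to sandwich the Sobolev–Morrey spaces between Besov–Morrey spaces with the same smoothness, using Theorem \ref{theoremVlozh}, and then invoke Theorem \ref{thInterpol}. Since the real method $(\cdot,\cdot)_{\theta,q}$ is an exact interpolation functor, it is monotone with respect to continuous embeddings. If $A_0\hookrightarrow B_0$ and $A_1\hookrightarrow B_1$, then $(A_0,A_1)_{\theta,q}\hookrightarrow (B_0,B_1)_{\theta,q}$. This follows directly from $K(f,t;B_0,B_1)\le C\,K(f,t;A_0,A_1)$. (The hypothesis on $\lambda$ should presumably read $0\le\lambda\le\frac1p$, as in the preceding theorems. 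We work in that range.)

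\textbf{Upper embedding.} Theorem \ref{theoremVlozh} with $\alpha=\beta_0$ and $\alpha=\beta_1$ gives $W^{\beta_i}(M_p^\lambda)\hookrightarrow B_\infty^{\beta_i}(M_p^\lambda)$ for $i=0,1$. Monotonicity then yields
$$\left(W^{\beta_0}(M_p^\lambda),W^{\beta_1}(M_p^\lambda)\right)_{\theta,q}\hookrightarrow\left(B_\infty^{\beta_0}(M_p^\lambda),B_\infty^{\beta_1}(M_p^\lambda)\right)_{\theta,q}.$$
Theorem \ref{thInterpol}, with $q_0=q_1=\infty$ and $h=q$, identifies the right-hand side with $B_q^\beta(M_p^\lambda)$.

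\textbf{Lower embedding.} Theorem \ref{theoremVlozh} also gives $B_1^{\beta_i}(M_p^\lambda)\hookrightarrow W^{\beta_i}(M_p^\lambda)$. Hence
$$B_q^\beta(M_p^\lambda)=\left(B_1^{\beta_0}(M_p^\lambda),B_1^{\beta_1}(M_p^\lambda)\right)_{\theta,q}\hookrightarrow\left(W^{\beta_0}(M_p^\lambda),W^{\beta_1}(M_p^\lambda)\right)_{\theta,q},$$
where the first identity is again Theorem \ref{thInterpol}, now with $q_0=q_1=1$ and $h=q$. Combining the two inclusions gives the claimed identity, with equivalent quasi-norms.

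\textbf{Main obstacle.} The delicate point is the range of validity of Theorem \ref{theoremVlozh}. Its proof relies on the multiplier Theorem \ref{theoremMultipl}, and hence on Theorem \ref{theoremMarsel-Riesz}, which is only established for $1<p<\infty$ and $0\le\lambda<\frac1p$. For the full range $0<p\le\infty$ one must either restrict to these parameters or supply another argument. If needed, I would first try a Bernstein/Nikol'skii-type argument: on each dyadic block $\Delta_k$ the multiplier $\bar m^{\alpha}2^{-k\alpha}$ is smooth, and for such symbols convolution with an $L_1$-bounded kernel is bounded on $M_p^\lambda$ by Minkowski's inequality, since translation preserves the Morrey norm. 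This disposes of the block-wise estimates for $p\ge1$.

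A second, smaller check is that $(W^{\beta_0}(M_p^\lambda),W^{\beta_1}(M_p^\lambda))$ is a compatible couple. Both spaces embed into the space of formal trigonometric series with coefficientwise convergence, so the $K$-functional is well defined. The argument therefore reduces to the two embeddings of Theorem \ref{theoremVlozh} plus the reiteration-type identity of Theorem \ref{thInterpol}.
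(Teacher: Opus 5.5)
Your argument is the paper's own: sandwich $B_1^{\beta_i}(M_p^\lambda)\hookrightarrow W^{\beta_i}(M_p^\lambda)\hookrightarrow B_\infty^{\beta_i}(M_p^\lambda)$ via Theorem~\ref{theoremVlozh}, pass to $(\cdot,\cdot)_{\theta,q}$ by monotonicity of the $K$-functional, and identify both ends with $B_q^\beta(M_p^\lambda)$ through Theorem~\ref{thInterpol} with $q_0=q_1=1$ and $q_0=q_1=\infty$. You also correct the paper's slips: the repeated $\beta_0$ in the couples, and $1\le\lambda$ in place of $0\le\lambda$.

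One remark on your side comment. The smooth-symbol convolution trick gives only the block-wise comparison $\|\Delta_k f^\alpha\|_{M_p^\lambda}\asymp 2^{k\alpha}\|\Delta_k f\|_{M_p^\lambda}$ for $p\ge1$. The embedding $W^\alpha(M_p^\lambda)\hookrightarrow B_\infty^\alpha(M_p^\lambda)$ additionally needs uniform boundedness of the sharp projections $\Delta_k$. That fails already on $L_1$ and $L_\infty$, where their norms grow like $k$, so your caution about the stated range $0<p\le\infty$ is justified.
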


\begin{proof}
By theorem \ref{theoremVlozh}, we have the continuous embeddings
$$B_1^{\beta_0}\left(M_p^\lambda\right)\hookrightarrow W^{\beta_0}\left(M_p^\lambda\right)\hookrightarrow B_\infty^{\beta_0}\left(M_p^\lambda\right)$$
and similarly,
$$B_1^{\beta_1}\left(M_p^\lambda\right)\hookrightarrow W^{\beta_1}\left(M_p^\lambda\right)\hookrightarrow B_\infty^{\beta_1}\left(M_p^\lambda\right).$$
It follows that
$$\left(B_1^{\beta_0}(M_p^\lambda),B_1^{\beta_0}(M_p^\lambda)\right)_{\theta,q}\hookrightarrow \left(W^{\beta_0}(M_p^\lambda),W^{\beta_0}(M_p^\lambda)\right)_{\theta,q}\hookrightarrow \left(B_\infty^{\beta_0}(M_p^\lambda),B_\infty^{\beta_0}(M_p^\lambda)\right)_{\theta,q}.$$
By applying Theorem \ref{thInterpol}, we obtain
$$B_q^{\beta}\left(M_p^\lambda\right)\hookrightarrow \left(W^{\beta_0}\left(M_p^\lambda\right),W^{\beta_0}\left(M_p^\lambda\right)\right)_{\theta,q}\hookrightarrow B_q^{\beta}\left(M_p^\lambda\right).$$
Therefore, the interpolation space coincides with the Besov space
$$\left(W^{\beta_0}\left(M_p^\lambda\right),W^{\beta_1}\left(M_p^\lambda\right)\right)_{\theta,q}=B_q^\beta\left(M_p^\lambda\right).$$
\end{proof}

\end{document}